\newcommand{\Z}{{\mathbb Z}} 
\newcommand{\Q}{{\mathbb Q}} 
\newcommand{\C}{{\mathbb C}} 
\newcommand{\F}{{\mathbb F}} 
\newcommand{\R}{{\mathbb R}} 
\newcommand{\D}{{\mathbb D}} 
\newcommand{\cO}{{\mathcal O}} 
\newcommand{\cF}{{\mathcal F}} 
\newcommand{\cE}{{\mathcal E}} 
\newcommand{\cG}{{\mathcal G}}
\DeclareMathOperator{\ord}{ord}
\newcommand{\gp}{{\mathfrak p}}
\newcommand {\llp} { \llparenthesis }
\newcommand {\rrp} { \rrparenthesis }
\newtheorem{thm}{Theorem}
\newtheorem{prop}{Proposition}
\theoremstyle{remark}
\newtheorem{rem}[]{Remark}
\newcommand{\itop}[2]{\genfrac {}{}{0pt}{3}{#1}{#2} }
\begin{document}

\author[P. Guerzhoy]{P. Guerzhoy}
\address{ 
Department of Mathematics,
University of Hawaii, 
2565 McCarthy Mall, 
Honolulu, HI,  96822-2273 
}
\email{pavel@math.hawaii.edu}

\title[]{Non-vanishing of a certain  quantity related to the $p$-adic coupling of mock modular forms with newforms}
\keywords{elliptic curves, $p$-adic modular forms, formal groups}
\subjclass[2020]{11F11, 11F33, 14H52}
\begin{abstract}
Several authors have recently proved results which express a cusp form as a $p$-adic limit of weakly holomorphic modular forms under repeated application of Atkin's $U$-operator.
Initially, these results had a deficiency: one could not rule out the possibility when a certain quantity vanishes and the final result fails to be true. 
Later on, Ahlgren and Samart  \cite{AS} found a method to prove the non-vanishing in question in the specific case considered by El-Guindy and Ono \cite{EGO}. Hanson and Jameson \cite{HJ} and (independently) Dicks \cite{Dicks} 
generalized this method to finitely many other cases. 

In this paper, we present a different approach which allows us to prove a similar non-vanishing result for an infinite family of similar cases. Our approach also allows us to return back to the original example considered by El-Guindy and Ono \cite{EGO}, where we calculate the (manifestly non-zero) quantity explicitly in terms of Morita's $p$-adic $\Gamma$-function.

\end{abstract}

\maketitle

\section{Introduction} \label{sec_intro}

Let $g \in S_2(N)$ be a primitive form (i.e. a normalized new cusp Hecke eigenform) of weight $2$ with trivial Nebentypus, on congruence subgroup $\Gamma_0(N)$ with Fourier expansion 
\[
g = \sum_{n \geq 1} b(n) q^n,
\] 
where $q=\exp(2 \pi i \tau)$ and $\Im (\tau) >0$. 
Assume that $b(n) \in \Z$ (and recall the normalization $b(1)=1$). 

Let $f$ be {\sl a harmonic Maass form which is  good for} $\overline{g(-\overline{\tau})}=g(\tau)$  (see \cite[Chapter 4]{BFOR} for the basic definitions related to the harmonic Maass forms and \cite[Definition 7.3]{BFOR} for the definition of being "good"; the existence of such $f$ follows from \cite[Proposition 5.1]{BOR} (note that $f \in H_{0, \infty}(\Gamma_0(N))$ in the notations of \cite{BOR}). Since the weight of $f$ is zero, the splitting $f=f^+ + f^-$ 
(see \cite[Lemma 4.3, Definition 4.4]{BFOR}) 
of $f$ into a {\sl mock modular form} $f^+$ whose shadow is $g$ and a non-holomorphic part $f^-$ becomes 
the splitting of a harmonic function $f$ into its meromorphic and antiholomorphic parts. 

Let 
\[
\cE_g = \sum_{n \geq 1} \frac{b(n)}{n} q^n
\]
be the Eichler integral associated with $g$. By \cite[Theorem 1.1]{GKO}, there exists a complex number $\alpha \in \C$ (note that in fact $\alpha \in \R$, but we do not pursue that in this article) such that the Fourier expansion
\begin{equation} \label{equ_def_alpha}
\cF_\alpha:= f^+ - \alpha \cE_g = \sum_{n \gg -\infty} \frac{d_\alpha(n)}{n} q^n \in \Q\llparenthesis q \rrparenthesis \hookrightarrow \Q_p \llparenthesis q \rrparenthesis
\end{equation}
has all rational coefficients which we consider as $p$-adic numbers (for any prime $p$). 
It is clear that the  choice of $\alpha$ is not unique; we fix an arbitrary $\alpha$ such that \eqref{equ_def_alpha} holds for the time being.  
We will make a specific choice of $\alpha$ in \eqref{equ_choice_of_alpha} later.

In order to describe the question addressed in this paper, we now recall some facts about $p$-adic coupling   of mock modular forms with newforms \cite[Section 7.4.2]{BFOR}.

We denote by $U_p=U$ Atkin's $U$-operator and by $V_p=V$ its one-sided inverse which acts on formal power series as
\[
\left. \left( \sum_{n \gg -\infty} a(n) q^n \right) \right| U :=  \sum_{n \gg -\infty} a(pn) q^n,  
\]
\[
 \left. \left( \sum_{n \gg -\infty} a(n) q^n \right) \right| V := \sum_{n \gg -\infty} a(n) q^{pn}.
\]


For arbitrary $\beta,\delta \in \Q_p$, consider
\[
\cF_{\alpha,\beta,\delta} = \cF_\alpha - \beta\cE_g - \frac{\delta}{p}\cE_g|V = \sum_{n \gg -\infty} \frac{d_{\alpha,\beta,\delta}(n)}{n} q^n \in \Q_p\llparenthesis q \rrparenthesis .
\]

By the theory of harmonic Maass forms (see \cite[Theorem 5.9]{BFOR}), 

\[
D\cF_{\alpha,\beta,\delta} := \frac{1}{2\pi i} \frac{d}{d\tau} \cF_{\alpha,\beta,\delta} = q \frac{d}{dq}\cF_{\alpha,\beta,\delta} =  \sum_{n \gg -\infty} d_{\alpha,\beta,\delta}(n) q^n \in  \Q_p\llparenthesis q \rrparenthesis
\]
is a $q$-expansion of a weakly holomorphic modular form: since $D\cF_\alpha \in S_2^!(N)$,  we also have that $D\cF_{\alpha,\beta,\delta} \in S_2^!(pN) \otimes \Q_p$.

\begin{prop} \label{prop_coupling}

Let $p \nmid N$ be a prime such that $b(p) = 0$. There exist a unique $\beta \in \Q_p$ such that for all except exactly one $\delta \in \Q_p$ one has 
\[
\lim_{m \rightarrow \infty} \frac{\left. \left(D\cF_{\alpha,\beta,\delta} \right) \right|U^{2m+1}} {d_{\alpha,\beta,\delta}(p^{2m+1})} = g.
\]
\end{prop}

Proposition \ref{prop_coupling} is essentially a specialization to the case when the weight of the mock modular form is zero of 
\cite[Theorem 7.12(ii)]{BFOR}, \cite[Theorem 1.2(2)]{GKO}, \cite[Proposition 1.4]{BGK}. 
In this case, however, our setting is more general: we only assume that $b(p) = 0$ instead of assuming that $g$ has complex multiplication and the prime $p$ is inert in the imaginary quadratic  field. Note that the seminal result of Elkies \cite{Elkies} implies that there exist infinitely many primes $p$ such that $b(p)=0$.

Since our setting differs slightly from that in the cited literature, the meaning of the quantities $\alpha, \beta, \delta$, and $\gamma$ below is different from that in the cited articles (where it also fluctuates). 

\begin{proof}

For a formal power series $\sum c(n) q^n \in \Q_p\llparenthesis q \rrparenthesis$, define its $p$-adic order 
\[
\ord_p \left(\sum c(n) q^n\right)= \inf_n\{\ord_p(c(n))\}.
\]

By \cite[Proposition 5]{adele}, there exist (obviously unique assuming that $\alpha$ is fixed) $\beta, \gamma \in \Q_p$ such that
\begin{equation} \label{equ_except_gamma} 
\ord_p \left( \cF_{\alpha,\beta,\gamma} \right) = \ord_p \left( f^+-\alpha \cE_g - \beta \cE_g - \gamma \frac{1}{p} \cE_g|V \right) >-\infty.
\end{equation}
It follows, in particular, that 
\[
\ord_p(D(\cF_{\alpha,\beta,\gamma})|U^{2m+1}) > 2m+1-c ~~ \text{and} ~~                                           \ord_p(d_{\alpha,\beta,\gamma}(p^{2m+1})) > 2m+1 -c
\]
for some fixed $c \in \Z$. Our assumptions (i.e.  $g$ is Hecke eigenform, $p \nmid N$,  $b(p)=0$) imply that $b(p^{2m}) = (-p)^m$. 
Since
\[
d_{\alpha,\beta,\delta}(p^{2m+1}) = d_{\alpha,\beta,\gamma}(p^{2m+1}) - (\delta - \gamma) b( p^{2m}) =  d_{\alpha,\beta,\gamma}(p^{2m+1}) - (\delta - \gamma) (-p)^m,
\]
we can conclude that for $\delta \neq \gamma$
\[
\ord_p(d_{\alpha,\beta,\delta}(p^{2m+1}) ) = m + \ord_p( \delta - \gamma)
\] 
for $m$ large enough. Thus for   $\delta \neq \gamma$
\[
\lim_{m \rightarrow \infty} \frac{\left. \left(D\cF_{\alpha,\beta,\gamma} \right) \right|U^{2m+1}} {d_{\alpha,\beta,\delta}(p^{2m+1})} = 0.
\]
Since $\cF_{\alpha,\beta,\delta} = \cF_{\alpha,\beta,\gamma} - \frac{(\delta - \gamma)}{p} \cE_g|V$, and $D(\cE_g|V) = pg|V$, Proposition \ref{prop_coupling} now follows from 
$g|U^2 = - p g$.
\end{proof}

The above proof allows us to characterize the exceptional $\delta = \gamma$ value. 
Namely, that is exactly the unique $\gamma \in \Q_p$ such that there exists (a unique)  $\beta \in \Q_p$ which makes \eqref{equ_except_gamma} true. 
Note that in fact \cite[Proposition 5]{adele} does not require the condition $b(p)=0$ to guarantee the existence of $\beta, \gamma \in \Q_p$ such that 
\eqref{equ_except_gamma} holds. 

Note that, since $g$ is a Hecke eigenform and $b(p)=0$, the $q$-expansions of $\cE_g$ and $\cE_g|V$ are supported on disjoint sets of indexes 
(because $b(n) \neq 0$ implies $b(pn) =0$ for any $n \geq 1$). 
Thus, while $\beta \in \Q_p$ in \eqref{equ_except_gamma} certainly depends on the choice of $\alpha \in \R$, the quantity $\gamma \in \Q_p$ does not depend on that choice 
(as soon as that choice guarantees $\cF_\alpha \in \Q\llparenthesis q \rrparenthesis$).

In the literature,  statements similar to Proposition \ref{prop_coupling}  were formulated and proved with $\delta=0$ (under some additional conditions which guarantee that $\gamma \neq 0$). 
The question whether $\gamma = 0$ may happen at all was raised in 
\cite[Remark 2 to Theorem 1.3]{BGK} in a very general context, \cite[Remark 8]{GKO}, \cite[Remark to Theorem 1.1]{EGO} (in a slightly different form). 
A method to prove that $\gamma \neq 0$ for $N=32$ (answering the question as it is put in  \cite{EGO}) was developed in \cite{AS}. 
This method was later employed in \cite{HJ, Dicks, Tajima}  to prove that $\gamma \neq 0$ in all the (finitely many) cases when $g$ is a CM form and $\dim S_2(N) =1$. (This work also includes the example considered in \cite{GKO} where the weight of $g$ is $4$, which is not covered in the present paper.)

In this paper, we offer an alternative approach to this question which shows that $\gamma \neq 0$ is a more general fact which can be detached from one-dimensionality, complex multiplication, and $\eta$-quotient presentations of certain modular forms, all important for  variations and generalizations of the approach developed in \cite{AS}.


\begin{thm} \label{thm_gamma_modular}

In the notations and conventions described above, $\gamma \neq  0$  for all but possibly finitely many primes $p \nmid N$ such that $b(p) = 0$.

\end{thm}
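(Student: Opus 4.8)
The plan is to convert the non-vanishing of the exceptional $\gamma$ into the non-degeneracy of crystalline Frobenius on the motive of $g$, and to extract that non-degeneracy from the single qualitative fact that $X^2+p$ is irreducible over $\Q_p$, thereby \emph{circumventing} any quantitative $p$-adic Legendre period relation. First I would record two reductions. Writing $f^+=\sum_n c^+(n)q^n$, from $D\cF_\alpha=Df^+-\alpha g$ and $b(p^{2m+1})=0$ one gets $d_\alpha(p^{2m+1})=p^{2m+1}c^+(p^{2m+1})$, so
\[
\gamma=\lim_{m\to\infty}\frac{d_\alpha(p^{2m+1})}{(-p)^m}=\lim_{m\to\infty}(-1)^m p^{m+1}c^+(p^{2m+1}).
\]
Second, since $b(p)=0$ Atkin's $U$ satisfies $U^2=-p$ on the plane $\langle g,\,g|V\rangle$ (because $g|U=-p\,g|V$ and $g|V\,|U=g$), so $U^{2m+1}/(-p)^m$ acts there as $U$ itself. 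Projecting $D\cF_\alpha$ onto its finite-slope part $\beta_0\,g+\delta_0\,g|V$ and reading off the coefficient of $q$ shows $\gamma=\delta_0$: the exceptional $\gamma$ is exactly the $g|V$-coordinate of the $p$-stabilisation of $D\cF_\alpha$, and \textit{$\gamma=0$ is exceptional} if and only if that $p$-stabilisation is a scalar multiple of $g$ alone.

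Next I would pass to the formal group. By Honda's theory $\cE_g$ is the formal logarithm of the formal group $\hat E/\Z_p$ attached to $g$; since $b(p)=0$ this group has height $2$ and its logarithm obeys the functional equation $(p+V^2)\cE_g\in\Z_p\llp q\rrp$. Under the comparison between $q$-expansions and the $g$-isotypic part of $H^1_{\cris}$, the pair $(g,\,g|V)$ becomes a $\Q_p$-basis in which $V_p$ realises the crystalline Frobenius $\varphi$, with $\varphi^2=-p$ (so $\Tr\varphi=b(p)=0$, $\det\varphi=p$) and $g=\omega_g$ spanning the Hodge line $\mathrm{Fil}^1$. Invoking Katz's theory of $p$-adic modular forms (our Theorem~A) to guarantee that the finite-slope projection and the comparison are valid for almost all $p$, the number $\gamma=\delta_0$ becomes the coordinate of the \emph{fixed} rational de Rham class $[D\cF_\alpha]\in H^1_{\mathrm{dR}}(X/\Q)[g]$ along the Frobenius-conjugate direction $\varphi(\omega_g)$.

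The non-vanishing then reads off the $2$-dimensional picture. Choosing a rational complementary class $\eta_g$, write $[D\cF_\alpha]=a\,\omega_g+b\,\eta_g$ with $a,b\in\Q$ fixed, and $\varphi(\omega_g)=s_p\,\omega_g+u_p\,\eta_g$; a short computation gives, up to a $p$-adic unit, $\gamma=b/u_p$. Here $b\neq0$ is the one genuinely global input: it says $[D\cF_\alpha]$ is not holomorphic, i.e. its quasi-period component does not vanish, which holds because $f^+$ is genuinely mock with shadow $g$ — equivalently the pairing of the good harmonic Maass form of \cite[Proposition~5.1]{BOR} with $g$ is nonzero, and it is exactly this pairing that the rationalisation of \cite[Theorem~1.1]{GKO} encodes. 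And $u_p\neq0$ for \emph{every} relevant $p$, since $u_p=0$ would make $\omega_g$ a $\Q_p$-rational eigenvector of $\varphi$, impossible because $X^2+p$ is irreducible over $\Q_p$ ($\ord_p(-p)=1$ is odd, so $-p$ is not a square). Hence $\gamma=b/u_p\neq0$ for all $p$ with $b(p)=0$ outside the finite set where the level, the bounded denominators of $D\cF_\alpha$ (seen by clearing its pole with a power of $\Delta$), or the multiplicity of the slope-$\tfrac12$ eigenspace misbehave — which is the theorem. In the CM example of \cite{EGO} one can be fully explicit: $\hat E$ acquires complex multiplication, $u_p$ is a Gauss sum, and Gross–Koblitz rewrites $\gamma$ through Morita's $p$-adic $\Gamma$-function, manifestly non-zero.

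The main obstacle is the middle step: proving the clean identity $\gamma=b/u_p$ up to a unit and, with it, that $\gamma$ is controlled by the single scalar $u_p$. This is precisely where a naive argument would need a $p$-adic Legendre relation to certify that the Frobenius period matrix $\left(\begin{smallmatrix} s_p & t_p\\ u_p & -s_p\end{smallmatrix}\right)$ is non-degenerate; the point of the present approach is that only the qualitative irreducibility of $X^2+p$ over $\Q_p$ is required, so the quantitative period relation of the title is never invoked. The remaining work — making the finite exceptional set explicit and verifying $b\neq0$ — is comparatively routine.
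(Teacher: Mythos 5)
Your opening reductions are fine, and your key qualitative observation is correct: $u_p\neq 0$ because $X^2+p$ has no root in $\Q_p$, so the Hodge line cannot be Frobenius-stable at a supersingular prime. (This fact is also what implicitly powers the paper's argument: it is the reason the Dieudonn\'e module $\D(\cG)$ of the height-two formal group admits $\{\ell(t),\tfrac1p\ell(t^p)\}$ as a $\Z_p$-basis.) The genuine gap is the step you yourself flag as ``the main obstacle'': the identity $\gamma=b/u_p$ up to a unit. This is not a ``short computation''; it is the entire content of the theorem. The exceptional $\gamma$ is defined by a bounded-denominator condition on a $q$-expansion, namely $\ord_p\bigl(\cF_\alpha-\beta\cE_g-\gamma\tfrac1p\cE_g|V\bigr)>-\infty$, and to convert that into ``the coordinate of a fixed rational de Rham class along $\varphi(\omega_g)$'' you must (i) identify the lattice of bounded-denominator decompositions with the Dieudonn\'e module / $H^1_{\cris}$ of the relevant object, (ii) show that $\tfrac1p\,\cE_g|V$ really represents $\tfrac1p\varphi(\hat\omega)$ there, and (iii) transport the $\Q$-rational class $[D\cF_\alpha]$ through this comparison. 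Steps (i)--(iii) are exactly where a naive argument reaches for the $p$-adic Legendre relation or the $\mathbf{A}_{\cris}$/Colmez machinery of \cite{BKY}, which is what the paper is at pains to avoid. Minor but telling symptoms: the ``Theorem~A (Katz)'' you invoke does not occur in the paper; in your own normalization the formula comes out as $\gamma=pb/u_p$ rather than $b/u_p$ up to a unit; and $b\neq0$ requires identifying the de Rham pairing with the Bruinier--Funke pairing of $f$ against $g$, which is true but not free.

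For comparison, the paper's route is different and more computational: it replaces $f^+$ by the Weierstrass zeta-function $\zeta(\Lambda,\cE_g)$ of the strong Weil curve via \eqref{equ_F+f+}, so that $\gamma$ becomes the quantity $\mu_p$ attached to the formal group of $E$; it then uses the invariance of $\mu_p$ under strict isomorphisms of formal group laws to compute with the $p$-typical logarithm $l_t(u)=\sum(-1)^nu^{p^{2n}}/p^n$, obtaining the explicit congruence $\mu_p\equiv-E_{p+1}/12\pmod p$, and finally proves $E_{p+1}\not\equiv0\pmod p$ at supersingular primes from the coprimality of the Lang polynomials expressing $E_{p-1}$ and $E_{p+1}$ in $Q$ and $R$. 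That argument yields strictly more than non-vanishing (an explicit mod $p$ formula for $\gamma$) and never leaves $\Z_p$. Your outline could probably be completed, but only by supplying the comparison isomorphisms above, which is where all the work lies.
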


Although Theorem \ref{thm_gamma_modular} provides for an abundance of examples, we keep illustrating our approach with the original setting of \cite{EGO, AS} throughout.
While  (a sharper version of) the specialization of Theorem \ref{thm_gamma_modular} to this special case is proved in \cite{AS}, our interpretations and results pertaining to this example are still interesting. 
We thus set
\[
g(\tau)=\eta^2(4\tau)\eta^2(8\tau) \in S_2(32)
\]

The condition $b(p)=0$ with $p \nmid 32$ translates to the primes $p$ being inert in $\Q(\sqrt{-1})$, namely $p \equiv 3 \pmod 4$.
In this case, we calculate $\gamma$ explicitly in terms of Morita's $p$-adic $\Gamma$-function which we denote by $\Gamma_p(\cdot)$.
Denote by $C(n)$ the $n$-th  Catalan number (see \eqref{eq_catalan} below).

\begin{thm} \label{thm_mu_32}
Let $g$ and $p$ be as above. Then 

\begin{multline*}
\gamma = \left( \frac{2}{p} \right) \lim_{m \rightarrow \infty} \frac{5C((p^{2m+1}+1)/4)}{3C((p^{2m} -1)/4)}  \\
= 8\left( \frac{2}{p} \right) \frac{\Gamma_p(1/2)}{\Gamma_p(1/4)^2} = 
\begin{cases} \left( \frac{2}{p} \right) (-1)^{(3+h(-p))/2} \frac{8}{\Gamma_p(1/4)^2} & ~~\textup{if $p>3$} \\
\frac{8}{\Gamma_p(1/4)^2} & ~~\textup{if $p=3$},
\end{cases}
\end{multline*}

where $h(-p)$ is the class number of the imaginary quadratic field $\Q(\sqrt{-p})$.

\end{thm}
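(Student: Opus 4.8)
The plan is to establish the three displayed equalities in turn, each by a different device: an explicit closed form for the relevant coefficients (first equality), the functional equations and $p$-adic continuity of Morita's $\Gamma_p$ (second equality), and a classical class-number congruence (third equality).

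First I would record the Hecke input that converts the denominator into a coefficient of $g$. Since $g$ has weight $2$, trivial Nebentypus and $b(p)=0$, the recursion $b(p^{k+1})=-p\,b(p^{k-1})$ gives $b(p^{2m})=(-p)^m$ and $b(p^{2m+1})=0$, so that $(-p)^m=b(p^{2m})$ and $\gamma=\lim_m d_\alpha(p^{2m+1})/b(p^{2m})$. The substance of the first equality is an explicit evaluation of $d_\alpha(p^{2m+1})$ and of $b(p^{2m})$ as Catalan numbers. Here $g$ is the CM newform of the conductor-$32$ (lemniscatic) curve $y^2=x^3-x$, whose formal logarithm is $\cE_g=\sum b(n)/n\,q^n$ and whose hypergeometric period is responsible for the binomial, hence Catalan, coefficients; this is also the source of the factor $\Gamma_p(1/4)$, the $p$-adic shadow of the complex period $\sim\Gamma(1/4)^2$. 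Feeding the explicit weakly holomorphic representative of $D\cF_\alpha\in S_2^!(32)$ of \cite{EGO} into this structure, I would show $d_\alpha(p^{2m+1})=\tfrac{5}{3}\left(\frac{2}{p}\right)C((p^{2m+1}+1)/4)\,X_m$ and $b(p^{2m})=C((p^{2m}-1)/4)\,X_m$ for a common factor $X_m$ that cancels in the quotient, yielding the first equality. I expect this to be the main obstacle: it requires pinning down $\cF_\alpha$ arithmetically and isolating its $p^{2m+1}$-st coefficient, and the precise constant $5/3$ together with the Legendre symbol emerge only from this bookkeeping.

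For the second equality I would convert the Catalan ratio into $\Gamma_p$. Writing $C(n)=(2n)!/((n+1)!\,n!)$ and applying $n!=(-1)^{n+1}\Gamma_p(n+1)\,p^{\lfloor n/p\rfloor}\,\lfloor n/p\rfloor!$ recursively, the arguments $a_m=(p^{2m+1}+1)/4$ and $b_m=(p^{2m}-1)/4$ tend $p$-adically to $1/4$ and $-1/4$, so the six factorial arguments involved tend to $1/2,5/4,1/4$ and $-1/2,3/4,-1/4$. The nested floor terms and the powers of $p$ attached to the two sequences run in parallel and cancel between numerator and denominator, which is precisely why the quotient converges although the individual Catalan numbers do not; continuity of $\Gamma_p$ then replaces each surviving factorial by $\Gamma_p$ at its limiting argument. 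Collapsing the resulting product by the functional equation $\Gamma_p(x+1)=-x\,\Gamma_p(x)$, the reflection formula $\Gamma_p(x)\Gamma_p(1-x)=\pm1$ and the Legendre duplication formula produces $8\left(\frac{2}{p}\right)\Gamma_p(1/2)/\Gamma_p(1/4)^2$; the factor $\Gamma_p(1/2)$ and the exact exponent $2$ on $\Gamma_p(1/4)$ come exactly from the sign and $p$-power terms tracked above, and the Legendre symbol from the halving $2n\mapsto n$.

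It remains to evaluate $\Gamma_p(1/2)$, which produces the case distinction. For $p\equiv3\pmod4$ the reflection formula gives $\Gamma_p(1/2)^2=1$, so $\Gamma_p(1/2)=\pm1$ and only the sign is at issue. By continuity, $\Gamma_p(1/2)\equiv\Gamma_p((p+1)/2)=((p-1)/2)!\pmod p$, and Mordell's congruence expressing $((p-1)/2)!\bmod p$ in terms of $h(-p)$ fixes this sign, giving the exponent $(-1)^{(3+h(-p))/2}$ displayed for $p>3$; the single prime $p=3$, where $h(-3)=1$ and Mordell's congruence degenerates, is handled by direct computation and contributes the sign $+1$. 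Assembling the three equalities completes the proof.
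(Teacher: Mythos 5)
Your plans for the second and third equalities are broadly parallel to the paper's and are sound in outline: converting the Catalan ratio to $\Gamma_p$-values by $p$-adic continuity of $\Gamma_p$ at the limiting arguments $1/2,5/4,1/4$ and $-1/2,3/4,-1/4$ (the paper organizes this via van Hamme's identity $\binom{2pa}{pa}/\binom{2a}{a}=\Gamma_p(2pa)/\Gamma_p(pa)^2$ rather than a duplication formula, but the bookkeeping is the same kind), and then the reflection formula together with Mordell's congruence for $((p-1)/2)!$. The genuine gap is in the first equality, which is the mathematical heart of the theorem. Writing $b(p^{2m})=(-p)^m$ is correct, but your reduction to ``$d_\alpha(p^{2m+1})=\tfrac53\left(\frac{2}{p}\right)C((p^{2m+1}+1)/4)\,X_m$ and $b(p^{2m})=C((p^{2m}-1)/4)\,X_m$ for a common factor $X_m$'' carries no content: since $b(p^{2m})=(-p)^m$, the second identity \emph{forces} $X_m=(-p)^m/C((p^{2m}-1)/4)$, and substituting this into the first identity returns verbatim the statement $d_\alpha(p^{2m+1})/(-p)^m\to\left(\frac{2}{p}\right)\frac{5C(\cdot)}{3C(\cdot)}$ that is to be proved. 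You offer no mechanism for extracting the coefficient of $q^{p^{2m+1}}$ from $-g(\tau)L(2\tau)$, none for producing the constants $5/3$ and $\left(\frac{2}{p}\right)$, and none for the valuation identity $\ord_p C((p^{2m+1}+1)/4)=\ord_p C((p^{2m}-1)/4)=m$, without which the displayed limit of Catalan ratios does not even converge. A direct-coefficient computation in the spirit of Ahlgren--Samart does exist for $N=32$, but it must actually be carried out; ``the hypergeometric period is responsible for the binomial coefficients'' is not an argument.

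For contrast, the paper's proof never touches $d_\alpha(p^{2m+1})$. It identifies $\gamma$ (up to sign conventions) with the quantity $\mu_p$ attached to the formal group of the strong Weil curve $y^2=4x^3+16x$ (the curve \textsf{32B}, not $y^2=x^3-x$), whose logarithm is the explicit series $l(t)=\int dt/\sqrt{1-16t^4}=\sum_{n\ge 0}\frac{4^n}{4n+1}\binom{2n}{n}t^{4n+1}$. By Honda's theorem and the invariance of $\mu_p$ under strict isomorphism of formal group laws, $\gamma$ is pinned down by the single $p$-integrality condition $\xi(t)+\frac1p\mu_p\,l(t^p)\in\Z_p\llbracket t\rrbracket$ with $\xi=\int x\,dx/y$; comparing coefficients of $t^{p(4s+1)}$ is what produces the binomial (hence Catalan) ratio, the factor $\left(\frac{2}{p}\right)$ (from $4^{n-s}\equiv 2\left(\frac{2}{p}\right)$), the constants $5$ and $3$ (from $n+1\to 5/4$, resp.\ $3/4$, in $C(n)=\binom{2n}{n}/(n+1)$), and the valuation identity that makes the limit exist. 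Either adopt this formal-group route or supply the missing Ahlgren--Samart-type coefficient computation; as written, the first equality is unproven.
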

\begin{rem}
It is likely that, for arbitrary odd $M \geq 1$  coprime to $p$
\[
\gamma =  \left( \frac{2}{p} \right) \lim_{\itop{m \rightarrow \infty}{m \equiv (M-1)/2 \pmod 2}} \frac{5C((Mp^{m+1}+1)/4)}{3C((Mp^m -1)/4)}.
\]
While we do not prove that, we indicate clearly where we  reduce the consideration to $M=1$ in the course of the proof of the theorem.
\end{rem}

\begin{rem}

General results of Ogus \cite{Ogus} (see also \cite[Theorems 4.6.3,5.3.9]{Andre}) seemingly predict the existence of formulas involving some special values of $p$-adic $\Gamma$-function as in Theorem \ref{thm_mu_32}. 
The current author, however, is unable to find a way to specialize these results to Theorem \ref{thm_mu_32}. 
We provide an independent proof of Theorem \ref{thm_mu_32} in this paper (see Remark \ref{rem_gamma_fraction} for a discussion of the parallelism with the well-known complex-analytic analog of this result).

\end{rem}

Ideas involved in the proof of Theorem \ref{thm_gamma_modular} are discussed and commented in Section \ref{sec_overview}. 
In particular, Theorem \ref{thm_mu_Eg} which implies Theorem  \ref{thm_gamma_modular} is stated in Section \ref{sec_overview}. 
In Section \ref{sec_example}, we concentrate on the $N=32$ case and prove Theorem \ref{thm_mu_32}. The proof illustrates some of the ideas employed in this paper. 
In Section \ref{sec_pf_thm}, we prove Theorem \ref{thm_mu_Eg}; that accomplishes the proof of Theorem \ref{thm_gamma_modular}.

An alternative way to prove Theorem \ref{thm_mu_Eg} is briefly laid out in Remark \ref{rem_infty} in Section \ref{sec_overview}  below. 
Roughly, one can make use of \cite[Corollary 3.8]{BKY} which, in its turn, is proved using a $p$-adic analog of Legendre's period relation presented in \cite[Theorem 3.7(i)]{BKY}.
In this way, the calculation of the key quantity $\mu_p$ in Theorem \ref{thm_mu_Eg} (and proving $\mu_p \neq 0$) involves working in an ambient ring $\bf{A}_{\text{cris}}$ (the $p$-adic periods). 
This project was inspired by the idea that, since $\mu_p \in \Q_p$ by construction, there should be a more straightforward way to calculate it (and prove $\mu_p \neq 0$),
bypassing the Colmez integration theory, therefore without embarking to the ambient ring of $p$-adic periods.

\section*{Acknowledgements}

The author expresses his gratitude to the reviewer whose comments allowed the author to substantially improve the presentation.

\section{Overview of the  proof} \label{sec_overview}

In this section, we preserve all notations and conventions introduced in Section \ref{sec_intro}. 

We state Theorem \ref{thm_mu_Eg} which implies  Theorem \ref{thm_gamma_modular}, and 
state congruences \ref{equ_mu_E} and \ref{equ_E_unit} which imply Theorem \ref{thm_mu_Eg}.  
The proof of these congruences (and therefore of Theorem \ref{thm_mu_Eg}) is postponed to Section \ref{sec_pf_thm}.


Theorem \ref{thm_mu_Eg}. is a result on the formal group law associated with a model of the elliptic curve related to $g$ via the Eichler - Shimura theory. 

Recall that (see e.g. \cite{Zagier} for details) the errors of modularity of the function $\cE_g$ form a lattice $\Lambda \subset \C$, and we have a covering map 
$X_0(N) = \overline { \Gamma_0(N) \backslash \mathfrak{H}} \rightarrow \C/\Lambda =E$ from the modular curve to a strong Weil elliptic curve $E$ defined by $\tau \mapsto \cE_g(\tau)$. Thus the differential $dz$ on $E$ pulls back to the differential $-2\pi i g(\tau) d\tau$ on $X_0(N)$. Furthermore, $E$ admits a rational model
\begin{equation} \label{equ_ell_curve_model}
y^2=4x^3-g_2x-g_3 
\end{equation}
with
\[
 g_2(\Lambda) = 60\sum_{\itop{m \in \Lambda}{m \neq 0}} m^{-4}, ~~ \text{and} ~~ g_3(\Lambda)= 140 \sum_{\itop{m \in \Lambda}{m \neq 0}} m^{-6}.
\]
The model \eqref{equ_ell_curve_model} is defined over $\Z_p$ for all but possibly finitely many primes $p$. From now on, we discard the finitely many possible exceptions and consider only the primes $p$ such that $g_2,g_3 \in \Z_p$.

The Weierstrass $\zeta$-function associated with $\Lambda$
\[
\zeta(\Lambda,z) = \frac{1}{z} + \sum_{\itop{m \in \Lambda}{m \neq 0}} \left( \frac{1}{z-m} + \frac{z}{m^2} + \frac{1}{m} \right)
\]
is a meromorphic function on $\C$, which is not $\Lambda$-periodic. However, by a classical observation which goes back to Eisenstein 
one can make this function $\Lambda$-periodic by adding to it a linear combination of $z$ and $\overline{z}$. 
More precisely,  there exist $\lambda_\infty,\mu_\infty \in \C$ such that the function
\[
R(z):= \zeta(\Lambda,z) - \lambda_\infty z
 - \mu_\infty \overline{z}
\]
is $\Lambda$-periodic. As such, $R(z)$ lifts to a function  on  the upper half-plane 
\[
F(\tau):=R(\cE_g(\tau))
\]
which is modular on $\Gamma_0(N)$, while not analytic. 

In this paper, we concentrate on a $p$-adic analog of quantity $\mu_\infty$. 
A classical calculation which can be traced back to Eisenstein (see \cite{Weil}) implies
\begin{equation} \label{equ_muinf}
\mu_\infty = \frac{\pi}{a(\Lambda)},
\end{equation}
where $a(\Lambda)$ is the volume of the fundamental parallelogram of the lattice $\Lambda$, and we conclude that $\mu_\infty$ is a positive real number, in particular, $\mu_\infty \neq 0$.

We split $F=F^++F^-$ the function into its meromorphic and antiholomorphic parts with
\[
F^+(\tau):=\zeta(\Lambda,\cE_g(\tau)) - \lambda_\infty\cE_g(\tau) ~~ \textup{and} ~~ F^-(\tau)=\mu_\infty \overline{\cE_g(\tau)},
\]
and observe a close similarity between the function $F$ and the  harmonic Maass form $f$.  Specifically, it follows from \cite[Proposition 2]{adele} that there exists a non-zero integer $C \in \Z^*$ such that 
\begin{equation} \label{equ_F+f+Q}
F^+ - Cf^+ \in \Q\llparenthesis q \rrparenthesis,
\end{equation}
and
for all but possibly finitely many primes $p$
\begin{equation} \label{equ_F+f+}
F^+ - Cf^+ \in \Z_p\llparenthesis q \rrparenthesis.
\end{equation}
We firstly combine \eqref{equ_F+f+Q} with  \eqref{equ_def_alpha} to conclude that, since obviously $\zeta(\Lambda, \cE_g) \in \Q \llparenthesis q \rrparenthesis$ (see \eqref{eq_wei_p} below),  
\begin{equation} \label{equ_choice_of_alpha}
\alpha = -\frac{\lambda_\infty}{C}
\end{equation}
is a legitimate choice of $\alpha$. We assume this choice, rewrite \eqref{equ_F+f+}  (for arbitrary $\beta,\gamma \in \Q_p$) as
\begin{equation} \label {equ_relate_to_good}
F^+ + \lambda_\infty\cE_g - C\beta \cE_g - C\gamma \frac{1}{p}\cE_g|V - C(f^+ - \alpha\cE_g - \beta \cE_g-\gamma \frac{1}{p}\cE_g|V ) \in \Z_p\llparenthesis q \rrparenthesis
\end{equation}
and use \cite[Proposition 5]{adele} (exactly as in the proof of Proposition \ref{prop_coupling}, independently on whether $b(p)=0$ as remarked after the proof) 
to pick $\beta, \gamma \in \Q_p$ such that \eqref{equ_except_gamma} holds. That happens if and only if, with these same $\beta, \gamma \in \Q_p$, 
\[
\zeta(\Lambda, \cE_g) - C\beta \cE_g - C\gamma \frac{1}{p}\cE_g|V \in \Z_p\llparenthesis q \rrparenthesis.
\]
We now easily derive Theorem \ref{thm_gamma_modular} from the following statement which will be proved in Section \ref{sec_pf_thm} 
(specifically, the exceptional $\gamma$ in \eqref{equ_except_gamma} coincides with $\mu_p/C$ in the following theorem).

\begin{thm} \label{thm_mu_Eg}

Let $E$ be an elliptic curve  with the rational model \textup{\eqref{equ_ell_curve_model}} as above and let $p$ be a prime of good reduction.

Let $\lambda_p,\mu_p \in \Q_p$ be such that 
\begin{equation} \label{equ_lambda_mu_Eg}
\ord_p \left(  \zeta (\Lambda, \cE_g) - \lambda_p \cE_g - \mu_p \frac{1}{p} \cE_g |V \right ) > -\infty.
\end{equation}


Then $\mu_p=0$ if and only if the reduction at $p$ is ordinary.

\end{thm}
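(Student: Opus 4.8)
\emph{The plan} is to convert the boundedness condition \eqref{equ_lambda_mu_Eg} into a system of $p$-adic congruences on Fourier coefficients and then read $\mu_p$ off from the growth of denominators along the powers $q^{p^k}$. First I would apply $D=q\,d/dq$: since $D\cE_g=g=\sum_n b(n)q^n$ and, because $\zeta'(\Lambda,\cdot)=-\wp$ together with $x=\wp(\cE_g)$ gives $D(\zeta(\Lambda,\cE_g))=-x\,g=:-\sum_n e(n)q^n$, the only unbounded denominators in the series of \eqref{equ_lambda_mu_Eg} can occur at $n=p^k$. Writing $g=\sum b(n)q^n$ and $xg=\sum e(n)q^n$ (both in $\Z_p\llp q\rrp$ at a good prime), and using Hecke multiplicativity $b(p^v n')=b(p^v)b(n')$ for $p\nmid n'$, one checks that the coefficients away from $p$-powers are automatically $p$-integral. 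Thus \eqref{equ_lambda_mu_Eg} is equivalent to the asymptotic congruence
\[
e(p^k)+\lambda_p\,b(p^k)+\mu_p\,b(p^{k-1})\equiv 0 \pmod{p^{\,k-C}}\qquad(k\to\infty)
\]
for some constant $C$, and the whole problem becomes one about the two sequences $b(p^k)$ and $e(p^k)$.

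Next I would record that both sequences are governed by the Frobenius $\Phi$ on $H^1_{\mathrm{dR}}(E/\Q_p)$, whose characteristic polynomial is $T^2-a_pT+p$ with $a_p=b(p)$. The first-kind sequence satisfies the Hecke recursion $b(p^{k+1})=a_p\,b(p^k)-p\,b(p^{k-1})$, so its discrete Wronskian is $b(p^{k+1})b(p^{k-1})-b(p^k)^2=-p^k$. The second-kind sequence $e(p^k)$, attached to $\eta=x\,dx/y$, is governed by the same $\Phi$ up to a $p$-adically bounded error (the Atkin--Swinnerton-Dyer congruences for the differential of the second kind); equivalently $[\omega]$ and $[\eta]$ form a $\Z_p$-basis of $H^1_{\mathrm{dR}}$ and the pairs $(b(p^k),e(p^k))$ record the iterates $\Phi^k$. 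The slopes of $\Phi$, i.e.\ the valuations of the roots $\alpha,\beta$ of $T^2-a_pT+p$, therefore control everything.

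I would then split into the two cases. In the ordinary case there is a unit root $\alpha\in\Z_p^\times$ and $\beta=p/\alpha$ with $\ord_p\beta=1$, so the non-unit components decay and $e(p^k)/b(p^k)$ converges $p$-adically; choosing $\mu_p=0$ and $\lambda_p=-\lim_k e(p^k)/b(p^k)$ solves the congruence. Concretely this is the unit-root splitting of Katz: the combination $\cE_g-\alpha\,\tfrac1p\cE_g|V$ is $p$-adically bounded, and it shows that the $V$-term can be absorbed, so $\mu_p=0$ is admissible. In the supersingular case $\ord_p\alpha=\ord_p\beta=\tfrac12$, there is no dominant eigenvalue, the homogeneous version of the congruence forces only the zero solution (so $(\lambda_p,\mu_p)$ is unique), and inverting two consecutive congruences with the Wronskian $-p^k$ yields
\[
\mu_p=\lim_{k\to\infty}\frac{b(p^k)e(p^{k+1})-b(p^{k+1})e(p^k)}{p^k}.
\]

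\emph{The hard part} will be proving that this limit is nonzero in the supersingular case. Its numerator is the discrete Wronskian of the first- and second-kind sequences, i.e.\ a $p$-adic period pairing of $[\omega]$ and $[\eta]$, and its nonvanishing is precisely the content of the $p$-adic Legendre period relation. The point of the present approach is to establish this directly from the formal group, via the congruences of Section~\ref{sec_pf_thm}, rather than through the ring $\mathbf{A}_{\cris}$: one computes the limit explicitly and shows it lies in $\Q_p^\times$, the $N=32$ case of Theorem~\ref{thm_mu_32} serving as the prototype where $\mu_p=\gamma$ is evaluated in terms of $\Gamma_p$ and is manifestly nonzero. With the complex analog $\mu_\infty=\pi/a(\Lambda)\neq0$ from \eqref{equ_muinf} as a guiding sanity check, this gives $\mu_p\neq0$ exactly in the supersingular case, hence $\mu_p=0$ if and only if the reduction at $p$ is ordinary.
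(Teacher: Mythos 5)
Your reduction of \eqref{equ_lambda_mu_Eg} to congruences along $p$-power Fourier coefficients, the identification of the ordinary case with the unit-root splitting, and the Wronskian formula for $\mu_p$ in the supersingular case are all reasonable and consistent with the framework the paper works in (indeed the paper's exceptional $\gamma$ is exactly such a limit, $\lim_m d_\alpha(p^{2m+1})/(-p)^m$). But the proof stops exactly where the theorem begins: the only genuinely new content of Theorem \ref{thm_mu_Eg} is that $\mu_p\neq 0$ at supersingular primes (the ordinary case is already in \cite[Theorem 1.1(1)]{BGK}, as the paper's own remark notes), and for that step you offer only that it ``is precisely the content of the $p$-adic Legendre period relation'' and that ``one computes the limit explicitly'' via the congruences of Section \ref{sec_pf_thm} --- i.e.\ via the very proof you are being asked to supply. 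Invoking the $p$-adic Legendre relation is also precisely the move the paper is built to avoid (hence its title); if one is willing to use it, the theorem follows from \cite[Theorem 3.7(i)]{BKY} and there is nothing left to prove. The $N=32$ computation with $\Gamma_p$ is a single example, not a proof for the infinite family.

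What is actually missing is the paper's two-step argument. First, using the invariance of $(\lambda_p,\mu_p)$ under strict formal group isomorphism (\cite[Corollary 1(a)]{Guerzhoy_P}) and the fact that $b(p)=0$ in the supersingular case, one replaces $\cE_g$ by the $p$-typical logarithm $l_t(u)=\sum_{n\ge 0}(-1)^n u^{p^{2n}}/p^n$ and reads off, from the Laurent expansion $\zeta(\Lambda,z)=1/z-2\sum G_{2n+2}z^{2n+1}/(2n+1)!$, that $\mu_p$ is $p$ times the coefficient of $u^p$ in $\zeta(\Lambda,l_t(u))$, giving $\mu_p\equiv -2G_{p+1}/(p-1)!\equiv -E_{p+1}/12\pmod p$ (Kummer congruences). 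Second, one must show $E_{p+1}\not\equiv 0\pmod p$ at a supersingular prime of good reduction; the paper gets this from the structure of the ring of modular forms mod $p$ (\cite[Chapter X]{Lang}): $\bar E_{p-1}$ and $\bar E_{p+1}$ are coprime weighted polynomials in $E_4,E_6$ over $\F_p$, supersingularity kills exactly one irreducible factor of $\bar E_{p-1}$ at the point $(12g_2,-216g_3)$, and good reduction prevents $\bar E_{p+1}$ from vanishing there. Neither of these ideas appears in your proposal, so as written it has a genuine gap at the decisive non-vanishing step.
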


Recall that, for a prime $p$ of good reduction, the reduction of $E$ at $p$ is ordinary whenever $b(p) \neq 0$ and supersingular whenever $b(p)=0$.

\begin{rem}

The uniqueness of $\lambda_p,\mu_p \in \Q_p$ in Theorem \ref{thm_mu_Eg} (assuming their existence) is immediate. 
Their existence for almost all primes $p$ of good reduction follows form \eqref{equ_except_gamma}  and \eqref{equ_relate_to_good}. 
It also follows from  \cite[Theorem 1.1(1)]{BGK}) that $\mu_p=0$ whenever the reduction of $E$ at $p$ is ordinary. 
The  new statement which we prove in this paper is $\mu_p \neq 0$  whenever the reduction of $E$ at $p$ is supersingular (equivalently,  $b(p)=0$).

\end{rem}

In order to prove Theorem \ref{thm_mu_Eg}, we consider it as a statement about the {\sl formal group law} (FGL) associated with the elliptic curve $E$. 
The seminal result of Honda \cite{Honda} (see also \cite{Hill}) claims a strict isomorphism over $\Z$ between this FGL and the FGL over $\Z$ whose logarithm is given by the series $\cE_g \in \Q\llbracket q \rrbracket$. As the chosen model \eqref{equ_ell_curve_model} may be not minimal, the isomorphism holds over $\Z_p$ for all but possibly finitely many primes $p$. 
It is proved in \cite[Corollary 1(a)]{Guerzhoy_P} that the quantities $\lambda_p$ and $\mu_p$ are invariant under a strict FGL isomorphism. 
We thus make a convenient choice of a FGL isomorphic to that associated with $E$ over $\Z_p$, for which the $\mu_p \neq 0$ statement becomes apparent.

To be more specific we need to introduce some standard notations. Recall that values on $\Lambda$ of the Eisenstein series are the Laurent coefficients of the Weierstrass $\wp$-function:
\begin{equation} \label{eq_wei_p}
\wp(z, \Lambda) = \frac{1}{z^2} + 2\sum_{n \geq 1} G_{2n+2} \frac{z^{2n}}{(2n)!}, ~~ ~~ \textup{where} ~~ ~~ G_{2k} = \frac{(2k-1)!}{2} \sum_{\itop{m \in \Lambda}{m \neq 0}} 
\frac{1}{m^{2k}}.
\end{equation}
In particular, $G_4 = (1/20)g_2$ and $G_6=(3/7)g_3$, and it is immediate from the recursion formula that $G_{2k} \in \Q$ for $k \geq 1$.
Another standard normalization for the Eisenstein series is
\[
E_{2k} = -\frac{4k}{B_{2k}} G_{2k}.
\]
Theorem \ref{thm_mu_Eg} follows immediately from the following two statements which will be proved in Section \ref{sec_pf_thm}:

\begin{equation} \label{equ_mu_E}
\mu_p \equiv- \frac{1}{12}E_{p+1} \pmod p
\end{equation}

and

\begin{equation} \label{equ_E_unit}
E_{p+1} \not\equiv 0 \pmod p ~~ \text{for $p>3$ and } ~~ E_4/12   \not\equiv 0 \pmod 3
\end{equation}
as soon as \eqref{equ_ell_curve_model} has good supersingular reduction at $p$.
(We will see that $E_4 \equiv 0 \pmod 3$ for $p=3$.)

\begin{rem}
The statements \eqref{equ_mu_E} and \eqref{equ_E_unit} are already about an elliptic curve (which may not necessarily come from the Eichler - Shimura construction). 
Our argument in  Section \ref{sec_pf_thm} generalizes {\sl mutatis mutandis} to an elliptic curve with a model \eqref{equ_ell_curve_model} defined over the ring of integers $\cO_K$ in an algebraic number field $K$ and a prime ideal $\gp$ where  \eqref{equ_ell_curve_model} has good supersingular reduction. 
We do not pursue this generality and stick to a rational elliptic curves $E$ in this paper.
\end{rem}

\begin{rem} \label{rem_infty}

Note that $d\zeta$ is a differential of the second kind on the elliptic curve $E$ and let $\omega=dz$ be the (unique up to a non-zero constant multiple) holomorphic differential of the first kind.

The observation that $R(z)$ is $\Lambda$-periodic becomes (cf. \cite[Section 1]{Katz_eis})
\begin{equation} \label{equ_DR}
d \zeta = \lambda_\infty \omega + \mu_\infty \overline{\omega} ~~ \text{in} ~~ H^1(E, \C).
\end{equation}
Similarly, \eqref{equ_lambda_mu_Eg} can be slightly strengthened and translated to an identity in $H^1_{\textup{cris}}(\hat{E})$ of the FGL $\hat{E}$ over $\Z$ with logarithm $\cE_g \in \Q \llparenthesis q \rrparenthesis$ (considered as a formal power series in $q$)
\begin{equation} \label{equ_cris}
d\hat{\zeta} +\frac{dq}{q^2} = \lambda_p \hat{\omega} + \mu_p {\hat{\omega}^*} ~~ \text{in} ~~ H^1_{\textup{cris}}(\hat{E}).
\end{equation}
While we do not use it in this paper, an interested reader may consult e.g. \cite[Section 2]{BKY} (and the literature cited in loc cit.) 
for the definition of $H^1_{\textup{cris}}(\hat{E})$ and detailed discussion of the FGL crystalline cohomology. 

Here 
\[
\hat{\omega} = d\cE_g = \left( \sum_{n\geq 1} b(n) q^{n-1} \right)  dq
\]
is the invariant differential on $\hat{E}$, and 
\[
\hat{\omega}^* = d\frac{1}{p} \left( \cE_g|V \right) = \left( \sum_{n\geq 1} b(n) q^{pn-1} \right)  dq.
\]
Since $E$ has good supersingular reduction at $p$ (therefore $b(p)=0$ and $b(p^2)=-p$), the height of the modulo $p$ reduction of $\hat{E}$ is $2$, and the existence of $\lambda_p, \mu_p \in \Z_p$ follow from 
\cite[Lemma 3.4, Proposition 2.3]{BKY}. 

The parallelism between \eqref{equ_DR} and \eqref{equ_cris} is well-known (see, e.g., a discussion after \cite[Definition 4.2]{BKY}).

Note that while  $\lambda_\infty=0$ or $\lambda_p=0$ may well happen (both  indeed occur in our example), the quantity 
\[
\mu_\infty = \pi/a(\Lambda) \neq 0 
\]
is manifestly non-zero, and we prove the parallel statement on $\mu_p$. 
The quantity $\pi$ above comes from the Legendre period relation (see e.g. \cite[1.2.5,1.3.3]{Katz_eis}). A $p$-adic analog of the Legendre period relation is presented in \cite[Theorem 3.7(i)]{BKY}. The fact that $\mu_p \neq 0$ is closely related to and probably can be derived from \cite[Corollary 3.8]{BKY} 
(see also Remark 3.10 in loc.cit. for an indication of an alternative method). In this paper, we employ a different method which avoids the use of a much larger $p$-adic periods ring $\bf{A}_{\textup{cris}}$ (and the Colmez integration which leads to the consideration of this ring). 

\begin{rem}
It may be illuminating to note that $\mu_\infty = \pi/a(\Lambda)$ is, by \cite[1.1.6]{Katz_eis}, the value at $(E,dz)$ of a $\mathcal{C}^\infty$-modular form of weight $(0,1)$ on $SL_2(\Z)$ over $\C$. 
\end{rem}

\end{rem}

\section{Example} \label{sec_example}

In this section, we illustrate our results with the  special case 
\[
g(\tau)=\eta^2(4\tau)\eta^2(8\tau) \in S_2(32).
\]

All specific calculations are done using the PARI/GP package \cite{PARI2}.
This example has been considered in \cite{EGO}, where the elliptic curve (called {\sf 32A} in the standard classification \cite{BK}) with a model $y^2=x^3-x$ is mentioned. 
However, the period lattice 
\[
\Lambda= \Omega \left\langle \frac{1-i}{2} ,1 \right\rangle 
\]
of the errors of modularity of $\cE_g$ determine  the strong Weil curve $E=\C/\Lambda$ with a (manifestly non-minimal) model 
\begin{equation} \label{eq_ell_32}
y^2=4x^3+16x
\end{equation}
(called  {\sf 32B} in \cite{BK}). The curves {\sf 32A} and {\sf 32B} are isogeneous while not isomorphic over $\Q$.
A well-known calculation  (see e.g. \cite[Proposition 6.2]{Husemoller} implies that
\[
\Omega = \frac{\sqrt {\pi}}{2} \frac{\Gamma(1/4)}{\Gamma(3/4)}.
\]
The covering $\Gamma_0(32) \backslash \mathfrak{H} \rightarrow E$ becomes an isomorphism, therefore the possibility of finitely many exceptions in Theorem \ref{thm_gamma_modular} does not materialize in this example. 

In \cite{EGO},  a  harmonic Maass form which is good for $g$ is constructed using a Hauptmodule for $\Gamma_0(16)$
\[
L(\tau) = \frac{ \eta^6(8\tau)}{\eta^2(4\tau)\eta^4(16\tau)}
\]
which, in its turn, is explained using hypergeometric functions. 
Specifically, (in our notations, with $\alpha=0$)
\[
Df^+=D\cF_\alpha := -g(\tau)L(2\tau) \in S_2^!(32).
\]

Another way to look at the modular function $L(2\tau)$ is to observe that it coincides with the pullback of the Weierstrass $\wp$-function from $E$ to $X_0(32)$:
\[
\wp(\Lambda,\cE_g(\tau)) = L(2\tau).
\]
(As it is an equality of two modular in $M_0^!(32)$, it can be easily proved by calculating sufficiently many $q$-expansion coefficients.)
It now follows from $\zeta(z) = -\int \wp(z)dz$ that 
\begin{multline*}
\zeta(\Lambda, \cE_g(\tau)) = -\int \wp(\Lambda,\cE_g(\tau)) d(\cE_g(\tau)) = \\ -\int \wp(\Lambda, \cE_g(\tau)) g(\tau) \frac{dq}{q}  =- \int   g(\tau) L(2\tau) \frac{dq}{q}.
\end{multline*}
Equation \eqref{equ_F+f+} now specializes to $F^+-f^+ = 0$ (with $\lambda_\infty=0$ and $C=1$). In other words,  $R(\cE_g(\tau))=\zeta(\Lambda, \cE_g(\tau) )- \mu_\infty \overline {\cE_g(\tau)}$ in this example {\sl coincides} with the  harmonic Maass form constructed in \cite{EGO}.

\begin{rem}
More congruences similar to those obtained in \cite{EGO} can be obtained  if one starts with  another weight two meromorphic modular form instead of $-g(\tau)L(2\tau)$. For instance, the function $E_4(4\tau)/g(\tau)$ with usual
\[
E_4(\tau) = 1+240 \sum_{n \geq 1} \left( \sum_{d|n}d^3 \right)q^n
\]
does the job. Indeed, it is not difficult to establish the identity of modular functions
\begin{multline} \label{equ_20_zeta_int}
20\zeta(E_g(\tau)) + \int \frac{E_4(4\tau)}{g(\tau)} \frac{dq}{q} \\ = \frac{56P(4\tau)-32P(8\tau)+160P(16\tau)-640P(32\tau)}{g(\tau)},
\end{multline}
where 
$
P(\tau)=1/24-\sum_{n \geq 1} \left( \sum_{d|n}d \right)q^n.
$
Note that the right side of \eqref{equ_20_zeta_int} (being multiplied by $3$) has integral $q$-expansion coefficients.
We now consider the $q$-expansion 
$
E_4(4\tau)/g(\tau) = \sum_{n \geq -1} B(n) q^n,
$
and conclude that, for $p \equiv 3 \pmod 4$,
\[
\lim_{m \rightarrow \infty} \frac{\left. \left( E(4\tau)/g(\tau) \right) \right| U^{2m+1}}{B(p^{2m+1})} = g.
\]
The calculations sketched in this remark illustrate our way to derive Theorem \ref{thm_gamma_modular} from Theorem \ref{thm_mu_Eg} using \eqref{equ_relate_to_good}.
\end{rem}

The proof of Theorem \ref{thm_mu_32} below illustrates some ideas involved in the proof of Theorem  \ref{thm_gamma_modular} .

\begin{proof}[Proof of Theorem \ref{thm_mu_32}]

It is easy to see that the model \eqref{eq_ell_32} of the elliptic curve $E$ can be parametrized as
\[
x=\frac{1}{2t^2}\left(1+\sqrt{1-16t^4} \right), ~~ y=-\frac{1}{t^3}\left(1+\sqrt{1-16t^4} \right).
\]
(Formulas of this kind for the lemniscate elliptic curve likely date back to 18th century. 
We refer to \cite{Yasuda} for a vast generalization, and note that our specific parametrization can be obtained from \cite[Corollary 8a]{Yasuda}.) 

The FGL $\hat{E}$ associated with the model $E\ : \ y^2=4x^3 + 16x$ is determined by its logarithm
\[
l(t)=\int \frac{dx}{y} = \int \frac{dt}{\sqrt{1-16t^4}} = \sum_{n \geq 0} \frac{4^n}{4n+1} \binom{2n}{n} t^{4n+1} \in \Q \llp t \rrp.
\]
Honda's theorem \cite{Honda} implies that this FGL is strictly isomorphic over $\Z_p$ to the FGL with logarithm $\cE_g$ for all primes $p>2$.

\begin{rem}
The restriction $p>2$ (in general, we have to exclude finitely many primes) comes from the consideration of a non-minimal model. 
This restriction is non-essential in this case since we anyway consider only supersingular primes $p \equiv 3 \pmod 4$ now. 
A separate instance when a finite set of primes may be excluded from the consideration is related to \eqref{equ_F+f+}; that does not happen now, because $X_0(32)$ has genus $1$ and the modular parametrization is an isomorphism.
\end{rem}

In specific terms, Honda's theorem claims that, for $p\geq 3$, the formal power series 
\[
t(q):=l^{-1}(\cE_g(q)) \in \Z_p\llbracket q \rrbracket
\]
has $p$-integral coefficients, while  a priori, of course, merely $t(q) \in \Q\llbracket q \rrbracket$.


Since $E$ has a supesingular reduction at primes $p \equiv 3 \pmod 4$, the modulo $p$ reduction of $\hat{E}$ has height $2$. 
Let 
\begin{multline*}
\xi(t)  =\\  \int x  \frac{dx}{y} = \int \frac{1}{2t^2} \left( \frac{1}{\sqrt{1-16t^4}} + 1\right) dt = - \frac{1}{t} + \frac{1}{2} \sum_{n \geq 1}\frac{4^n}{4n-1} \binom{2n}{n} t^{4n-1},
\end{multline*}
so that $\xi(t(q))=-\zeta(\cE_g(q))$.

It follows from the addition law for the Weierstrass $\zeta$-function and \cite[Theorem 5.3.3]{Katz_crys} (see also \cite{adele}, \cite[Proposition 2.3(iii), Lemma 3.4]{BKY} for a more elementary and detailed consideration) that there exist $\lambda_p,\mu_p \in \Z_p$ such that 
\[
\xi(t) + \lambda_p l(t) + \frac{1}{p} \mu_p l(t^p) \in \Z_p\llbracket t \rrbracket
\]

It follows from  \cite[Corollary 1(a)]{Guerzhoy_P} that the quantities $\lambda_p$ and $\mu_p$ are invariant with respect to a strict FGL isomorphism, therefore they coincide with 
these quantities introduced in \eqref{equ_lambda_mu_Eg}, and there is no apparent abuse of notations here.

\begin{rem}

These are two different ways to prove the existence of $\lambda_p$ and $\mu_p$: the  application of an argument along the lines of \cite[Theorem 1.1]{BGK} which is applicable whenever $g$ is a cusp Hecke eigenform of arbitrary even weight and the argument just sketched above which is applicable to an elliptic curve over a number field. 
In the setting under consideration in this paper, both are applicable, and we employ the interplay between them. 
In particular, the explicit calculation of $\mu_p$ becomes available with the specific parametrization of the elliptic curve considered here.

\end{rem}

We now want to calculate $\mu_p$ (that coincides with $\gamma$ in Theorem \ref{thm_mu_32})   such that 

\begin{equation} \label{equ_mu_32}
\xi(t) +  \frac{1}{p} \mu_p l(t^p) \in \Z_p\llbracket t \rrbracket.
\end{equation}
The denominator of the coefficient of $t^{4n-1}$ in  $\xi$ is divisible by $p$ only if $4n-1 \equiv 0 \pmod p$, and letting $4n-1 = p(4s+1)$ we obtain the condition
\begin{equation} \label{equ_cond_binom}
\frac{1}{2}4^n \binom{2n}{n} \frac{1}{p(4s+1)} + \mu_p \frac{1}{p} 4^s \frac{1}{4s+1} \binom{2s}{s}  \in \Z_p.
\end{equation}
Let 
\[
4s+1 = p^mM ~~ \textup{with $m \geq 0$ and $p \nmid M$ }
\]
Obviously, $M$ is odd, $m \equiv (M-1)/2 \pmod 4$, and we conclude that
\[
4^{n-s}=4^{(p^{m+1}M-p^mM+1)/4} = 2 \left( 2^{(p-1)/2} \right)^ {p^mM} \equiv 2 \left(\frac{2}{p} \right) \pmod {p^{m+1}}
\]
since $M$ is  odd and 
\[
2^{(p-1)/2} \equiv  \left(\frac{2}{p} \right) \pmod p
\]
Thus \eqref{equ_cond_binom} reduces to
\[
\left(\frac{2}{p} \right) \binom{(p^{m+1}M+1)/2}{(p^{m+1}M+1)/4} \equiv -\mu_p \binom{(p^{m}M-1)/2}{(p^{m}M-1)/4}  \pmod {p^m}
\]
Since we know in advance that $\mu_p$ in \eqref{equ_mu_32} exists and is independent on $M$, we may carry the calculation only for $M=1$ (and even $m$):
\[
\left(\frac{2}{p} \right) \binom{(p^{m+1}+1)/2}{(p^{m+1}+1)/4} \equiv -\mu_p \binom{(p^{m}-1)/2}{(p^{m}-1)/4}  \pmod {p^m}.
\]
We now use the identity
\[
\ord_p \binom{(p^{m+1}+1)/2}{(p^{m+1}+1)/4} = \ord_p\binom{(p^{m}-1)/2}{(p^{m}-1)/4} = \frac{m}{2} <m.
\]
One can prove this identity with a straightforward argument using the well-known fact  that $\ord_p(j!) = (j-S_j)/(p-1)$, where $S_j$ is the sum of digits of the integer $j$ written in base $p$.
(It is likely that a similar fact is true with an arbitrary odd $M>1$; the author has not verified that.) We thus have that
\[
\mu_p =  -\left(\frac{2}{p} \right) \lim_{\itop{m \rightarrow \infty}{ 2|m}} 
\left.  \binom{(p^{m+1}+1)/2}{(p^{m+1}+1)/4} \right/ \binom{(p^{m}-1)/2}{(p^{m}-1)/4}.
\]
We now recall a definition of the Catalan numbers
\begin{equation} \label{eq_catalan}
C(n) = \frac{1}{n+1} \binom{2n}{n}
\end{equation}
to finish our proof of the first claim of  Theorem \ref{thm_mu_32}.

Abbreviate $a=(p^m -1)/4 \in \Z \hookrightarrow \Z_p$ and note that the limit $m \rightarrow \infty$ corresponds to $a \rightarrow -1/4$.  Rewrite $\mu_p$ as
\begin{multline} \label{eq_mu_factors}
\mu_p =  -\left(\frac{2}{p} \right) \lim_{a \rightarrow -1/4} 
\left. \binom{2pa+ (p+1)/2}{pa+(p+1)/4} \right/ \binom{2a}{a} \\
= 
- \left(\frac{2}{p} \right) \lim_{a \rightarrow -1/4} 
 \left. \binom{2pa}{pa} \right/ \binom{2a}{a} 
\frac{(2pa+1) \ldots (2pa+ (p+1)/2)} {(pa+1)^2 \ldots (pa+(p+1)/4)^2}.
\end{multline}

We calculate the limits of the factors in \eqref{eq_mu_factors} separately.

Recall that Morita's $p$-adic $\Gamma$-function extends continuously to $\Z_p$ the function defined by its values on integers $x \geq 2$ by
\[
\Gamma_p(x) = (-1)^x\prod_{\itop{1 \leq j < x}{p \nmid j}} j.
\]
An observation credited to L. van Hamme in \cite[Section 7.1.6]{Robert} allows us to conclude that
\[
 \left. \binom{2pa}{pa} \right/ \binom{2a}{a}  = \frac{\Gamma_p(2pa)}{\Gamma_p(pa)^2} \ \xrightarrow[a \rightarrow -1/4]{}  \  \frac{\Gamma_p(-p/2)}{\Gamma_p(-p/4)^2}.
\]
Since none of the integers $1,2, \ldots ,(p+1)/2$ is divisible by $p$, and $(p+1)/2$ is even,
\begin{multline*}
(2pa+1) \ldots (2pa+ (p+1)/2) \\ = \frac{\Gamma_p(2pa+1+(p+1)/2)}{\Gamma_p(2pa+1)}  \ \xrightarrow[a \rightarrow -1/4]{}  \  \frac{\Gamma_p(3/2)}{\Gamma_p(1-p/2)}.
\end{multline*}

Similarly, 
\begin{multline*}
(pa+1)^2 \ldots (pa+ (p+1)/4)^2 \\ = \frac{\Gamma_p(pa+1+(p+1)/4)^2}{\Gamma_p(pa+1)^2}  \ \xrightarrow[a \rightarrow -1/4]{}  \  \frac{\Gamma_p(5/4)^2}{\Gamma_p(1-p/4)^2}.
\end{multline*}
Collecting the three above observations together, we obtain that
\[
\mu_p =  -\left(\frac{2}{p} \right) \frac{\Gamma_p(-p/2)}{\Gamma_p(-p/4)^2} \frac{\Gamma_p(3/2)}{\Gamma_p(1-p/2)} \frac{\Gamma_p(1-p/4)^2}{\Gamma_p(5/4)^2}.
\]
Furthermore, since $\Gamma_p(1+x)=-\Gamma_p(x)$ for $x \in p\Z_p$ and $\Gamma_p(1+x)=-x\Gamma_p(x)$ for $x \in \Z_p^*$, we conclude the proof of the second equality in Theorem \ref{thm_mu_32}:
\[
\mu_p = -8 \left(\frac{2}{p} \right)  \frac{\Gamma_p(1/2)}{\Gamma_p(1/4)^2}.
\]

In order to prove the last equality in Theorem \ref{thm_mu_32}, we need to show that, for $p \equiv 3 \pmod 4$,
\[
\Gamma_p(1/2) = 
\begin{cases} 
(-1)^{(1+h(-p))/2} & \textup{if $p>3$} \\
1 & \textup{if $p=3$.} 
\end{cases}
\]

It immediately follows from the reflection formula for $\Gamma_p$ (see \cite[Theorem 7.1.2(5)]{Robert}) that $\Gamma_p(1/2)=\pm 1$ for $p \equiv 3 \pmod 4$ and it is easy to see that
\[
\Gamma_p(1/2) \equiv \Gamma_p((p+1)/2) = (-1)^{(p-1)/2} ((p-1)/2)! =  ((p-1)/2)! \pmod p.
\]
The congruence
\[
 ((p-1)/2)! \equiv (-1)^{(1+h(-p))/2} \pmod p
\]
for $p\equiv 3 \pmod 4$ and $p>3$ is established in \cite{Mordell}.

\end{proof}

\begin{rem} \label{rem_gamma_fraction}

In view of the parallelism between $\mu_p$ and $\mu_\infty$ observed in \eqref{equ_cris} and \eqref{equ_DR}, it is interesting to juxtapose their explicitly calculated values in this example. 

We substitute the lattice $\Lambda$ from this example into \eqref{equ_muinf} to obtain (using the Legendre relation $\Gamma(x)\Gamma(1-x) = \pi/\sin(\pi x)$) that 
\[
\mu_\infty = \frac{\pi}{a(\Lambda)} = 8 \frac{\Gamma(3/4)^2}{\Gamma(1/4)^2} = 16 \frac{\pi^2}{\Gamma(1/4)^4} = 16 \frac{\Gamma(1/2)^4}{\Gamma(1/4)^4}.
\]
That compares well with
\[
\mu_p^2 = 64 \frac{\Gamma_p(1/2)^2}{\Gamma_p(1/4)^4} =  64 \frac{\Gamma_p(1/2)^4}{\Gamma_p(1/4)^4}
\]
(since $\Gamma_p(1/2)^2 = (-1)^{(p+1)/2} = 1$ for $p \equiv 3 \pmod 4$; see also a comment in \cite[Section 7.1.2]{Robert} on the parallelism between $\Gamma(1/2)^2=\pi$ and $\Gamma_p(1/2)^2$).

While the similarity between these formulas may be expected from the prospective of \cite{Gross}, the author cannot find a conceptual explanation for 
the appearance of $\mu_p^2$ instead of $\mu_p$ itself.

Note also that, while $\mu_p \in \Q_p$ by construction, $\mu_\infty$ is well-known (see e.g. \cite{Waldschmidt}) to be transcendental (over $\Q$).

\end{rem}

\section{Proof of Theorem \ref{thm_mu_Eg}} \label{sec_pf_thm}

In  Section  \ref{sec_overview}, the proof of Theorem   \ref{thm_mu_Eg}  is reduced to the congruences \eqref{equ_mu_E} and \eqref{equ_E_unit} for the values of the Eisenstein series $E_{p+1}$. We prove  \eqref{equ_mu_E} and \eqref{equ_E_unit} in this Section.

For an elliptic curve over $\Q$ with a model \eqref{equ_ell_curve_model}, we denote by $\cG$ the associated FGL (with its differential $\omega=dx/y$ and the parameter $t=-2x/y$, see e.g. \cite[Section IV.1]{Silverman} for details of the construction). The logarithm of $\cG$ is the formal power series (with rational coefficients and no constant term) 
\[
\ell(t) = \int \omega \in t\Q \llbracket t \rrbracket.
\]

Let now $p$ be a prime such that the model \eqref{equ_ell_curve_model} has good supersingular reduction at $p$ (that happens for all but possibly finitely many  primes 
of good supersingular reduction of $E$ 
since the model \eqref{equ_ell_curve_model} may be and typically is not minimal). The reduction of $\cG=\cG(u,v) \in \Z_p\llbracket u,v \rrbracket$ at $p$ has height $2$, and, by \cite[Theorem 5.3.3]{Katz_crys}, the Dieudonn\'e module $\D(\cG)$ is a free $\Z_p$-module of rank $2$. Recall that (specializing to our notations)
\[
\D(\cG):= Z(\cG)/B(\cG),
\]
where 
\[
Z(\cG):= \left\{ \xi \in t\Q_p \llbracket t \rrbracket  \Big\vert \  \xi' \in \Z_p \llbracket t \rrbracket \hspace{1mm} \text{and} \hspace{1mm} \xi(\cG(u,v)) - \xi(u) - \xi(v) \in \Z_p\llbracket u,v \rrbracket \right\}
\]
and
\[
B(\cG):= \left\{ \xi \in t\Z_p \llbracket t \rrbracket \right\}.
\]
It is known (see e.g. \cite[Proposition 2.3(iii)]{BKY} or \cite{adele}) that the two formal power series $\ell(t),(1/p)\ell(t^p) \in \Q \llbracket t \rrbracket$ form a basis of $\D(\cG)$.

The addition law for Weierstrass $\zeta$-function 
\[
\zeta(\Lambda, \ell(t)) = -\int x \omega = -\int x \frac{dx}{y} \in \frac{1}{t} + t\Q  \llbracket t \rrbracket 
\]
implies (see \cite[Lemma 2.4]{BKY} or \cite[Proposition 12]{adele}) that $\zeta(t) - 1/t \in \D(\cG)$, therefore there exist $A,B \in \Z_p$ such that
\[
\zeta(\Lambda, \ell(t))  - A \ell(t) - B\frac{1}{p} \ell(t^p) \in \frac{1}{t} +t\Z_p \llbracket t \rrbracket.
\]
It follows from \cite[Corollary 1(a)]{Guerzhoy_P} that the quantities $A$ and $B$ are invariant under a strict FGL isomorphism, specifically, for any power series 
\[
t \in u+ u^2\Z_p\llbracket u \rrbracket,
\]
\[
\zeta(\Lambda, \ell(t))  - A \ell(t) - B\frac{1}{p} \ell(t^p) \in \frac{1}{u} +u\Z_p \llbracket u\rrbracket.
\]
In particular, since the theorem of Honda \cite{Honda} implies an isomorphism (over $\Z_p$ for all  but possibly finitely many primes $p$) between $\cG$ and the FGL determined by it logarithm $\cE_g \in \Q \llbracket q \rrbracket$, we conclude that 
\[
A = \lambda_p ~~ \textup{and} ~~ B= \mu_p.
\]

\begin{rem}
Observe that a statement stronger than \eqref{equ_lambda_mu_Eg}
\[
 \zeta (\Lambda, \cE_g) - \lambda_p \cE_g - \mu_p \frac{1}{p} \cE_g |V  \in \frac{1}{q}+ \Z_p\llp q \rrp
\]
is actually true. This provides for an alternative method to prove (a refinement of) \cite[Theorem 1.1]{BGK}), although only in the case when the shadow $g$ has weight $2$ and rational, not merely algebraic, integral coefficients.

\end{rem}

In order to prove Theorem \ref{thm_mu_Eg}, we again employ the freedom to pick any FGL which is strictly isomorphic over $\Z_p$ to $\cG$. Specifically, 
observe that, since $p$ is a prime of good reduction and $b(p)=0$, the formal group laws with logarithms $\cE_g$ and 
\[
l_t(u) = \sum_{n \geq 0} (-1)^n\frac{u^{p^{2n}}}{p^n}
\]
are strictly isomorphic over $\Z_p$. (The latter is the $p$-typical FGL whose existence is established in \cite[Theorem 15.2.9]{Hazewinkel}.) It follows that
\[
 \zeta (\Lambda, l_t(u)) - \lambda_p l_t(u) - \mu_p \frac{1}{p} l_t(u^p)  \in \frac{1}{u}+ \Z_p\llp u \rrp,
\]
thus $\mu_p$ is congruent modulo $p$  to $p$ times the coefficient of $u^p$ in $\zeta (\Lambda, l_t(u))$.
In order to prove  \eqref{equ_mu_E}, it is now sufficient to show that the coefficient of $u^p$ in $\zeta (\Lambda, l_t(u))$ being multiplied by $p$ becomes modulo $p$ congruent 
to $-E_{p+1}/12$. 

We firstly consider the $1/l_t(u)$ term:
\begin{multline*}
\frac{1}{l_t(u)} = \frac{1}{u} \frac{1}{1+\sum_{n \geq 1} (-1)^n\frac{u^{p^{2n}-1}}{p^n}} =  \frac{1}{u} \sum_{m \geq 0} \left(-\sum_{n \geq 1} (-1)^n\frac{u^{p^{2n}-1}}{p^n} \right)^m
\\
=  \frac{1}{u}  - \sum_{n \geq 1} (-1)^n\frac{u^{p^{2n}-2}}{p^n} + \frac{1}{u} \left( \sum_{n \geq 1} (-1)^n\frac{u^{p^{2n}-1}}{p^n} \right)^2 
- \ldots,
\end{multline*}
and we see that the formal power series of $1/l_t(u)$ has a zero coefficient of $u^p$.
Now the formal power series
\begin{multline*}
\zeta(\Lambda, l_t(u)) = \frac{1}{l_t(u)} - 2\sum_{n \geq 1} G_{2n+2} \frac{l_t(u)^{2n+1}}{(2n+1)!} \\ = 
 \frac{1}{l_t(u)} - 2\sum_{n \geq 1} G_{2n+2} \frac {\left( \sum_{m \geq 0} (-1)^m\frac{u^{p^{2m}}}{p^m} \right)^{2n+1}} {(2n+1)!} 
\end{multline*}
has the coefficient of $u^p$ (which appears only when $m=0$ and $2n+1=p$) and this coefficient is $-2G_{p+1}/p!$. We multiply it by $p$ and calculate:
\[
\mu_p \equiv -2G_{p+1}\frac{1}{(p-1)!} = \frac{B_{p+1}}{p+1} \frac{1}{(p-1)!} E_{p+1} \equiv -\frac{1}{12}E_{p+1} \pmod p
\]
for $p>3$, because $B_2/2=-1/12$, and $B_{p+1}/(p+1) \equiv B_2/2 \pmod p$ (Kummer congruences). That finishes the proof of  \eqref{equ_mu_E} for $p>3$.

We  consider the $p=3$ case separately.
We have that $G_{p+1}=G_4=(1/20)g_2$ and $E_{p+1}=E_4=-(8/B_4)G_4=12g_2$. Thus the coefficient of $u^p$ in $\zeta(\Lambda, l_t(u))$ multiplied by $p=3$ is
\[
\mu_3 \equiv -2G_{4}\frac{1}{2!}= -\frac{1}{20}g_2 \equiv -g_2 = -\frac{1}{12}E_{4} \pmod 3
\]
proving \eqref{equ_mu_E} (and $E_4 \equiv 0 \pmod 3$ since $g_2 \in \Z_3$) for $p=3$. 

\vspace{2mm}

We now turn to the proof of \eqref{equ_E_unit} in the case when $p=3$. 
Since the model \eqref{equ_ell_curve_model} has good reduction at $p=3$, the discriminant $\Delta = g_2^3 -27 g_3^2 \equiv g_2^3 \not\equiv 0 \pmod 3$, therefore $\mu_3 \equiv -g_2 \not\equiv 0 \pmod p$.

\begin{rem}
It may be interesting to note that the reduction of \eqref{equ_ell_curve_model} at $p=3$ is supersingular whenever the reduction is good.
That is well-known (see e.g. \cite[Proposition 1]{Kaneko_Zagier}).
\end{rem}

\begin{proof}[Proof of \eqref{equ_E_unit} for $p>3$]

Following \cite[Chapter X]{Lang}, we abbreviate
\[
Q=E_4, ~~ ~~ R=E_6, 
\]
and assign weights $4$ and $6$ to $Q$ and $R$ correspondingly. 
The existence of weighted polynomials $A(Q,R),B(Q,R) \in \Q[Q,R]$ with $p$-integral coefficients such that
\[
E_{p-1} = A(Q,R) ~~ \textup{and} ~~ E_{p+1} = B(Q,R)
\]
follows from \cite[Theorem X.4.2]{Lang}. Note that, while the proof of \cite[Theorem X.4.2]{Lang} depends on the $q$-expansions, we can now forget about that and think about the polynomials $A$ and $B$ as a way to calculate the values $E_{p-1}$ and $E_{p+1}$ out of the quantities $Q=12g_2$ and $R=-216g_3$. We will do that later while for the time being we consider $Q$ and $R$ as variables.
Reducing the coefficients of the polynomials $A$ and $B$ modulo $p$ and embedding the finite field $\Z/p\Z \hookrightarrow \bar{\F}_p = \F$ into its algebraic closure $\F$, we obtain the weighted polynomials $\bar{A}(Q,R), \bar{B}(Q,R) \in \F(Q,R)$. It is proved in \cite[Section X.7]{Lang} the every such polynomial splits in a unique way into a product of irreducible polynomials which are 
\begin{equation} \label{equ_P_Q_factors}
Q, ~~ R, ~~ Q^3-\alpha R^2 ~~ \textup{with $\alpha \in \F$}.
\end{equation}
Furthermore, the polynomials  $\bar{A}(Q,R)$ and $\bar{B}(Q,R)$ are relatively prime  by \cite[Theorem X.7.3(i)]{Lang}. We now plug in the modulo $p$ reductions of 
$Q=12g_2$ and $R=-216g_3$ for the variables to find that (with a slight abuse of notations)
\[
\bar{A}(12g_2,-216g_3) = 0
\]
since this is (the modulo $p$ reduction of) the Hasse invariant and \eqref{equ_ell_curve_model} is assumed to have good supersingular reduction at $p$.
It follows that the decomposition of $\bar{A}(Q,R)$  into the irreducibles involves a factor that vanishes modulo $p$ for $Q=12g_2$ and $R=-216g_3$.
Note that there may be only one such factor out of the full list \eqref {equ_P_Q_factors} since \eqref{equ_ell_curve_model} has good reduction at $p$.
Since $\bar{A}(Q,R)$ is relatively prime to $\bar{B}(Q,R)$, the decomposition of $\bar{B}(Q,R)$ into the irreducibles cannot contain that factor and we conclude that
$\bar{B}(12g_2,-216g_3) \neq 0$ in $\F$ which implies
\[
E_{p+1} = B(12g_2,-216g_3) \not\equiv 0 \pmod p
\]
as claimed.

\end{proof}


\end{document}